\documentclass[12pt]{article}

\usepackage[T1]{fontenc}
\usepackage[utf8]{inputenc}
\usepackage{pifont}
\usepackage{mathrsfs}
\usepackage{graphicx,ifpdf}
\usepackage{amsmath,amsfonts,amsthm,amssymb}
\usepackage{bbding}
\usepackage{graphics,multicol}
\usepackage{color}
\usepackage{fourier}
\usepackage{float}
\usepackage{enumerate}
\usepackage{enumitem}
\usepackage[hypertexnames=false,hidelinks]{hyperref}
\usepackage[numbers]{natbib}
\usepackage{thm-restate}
\usepackage{indentfirst}

\allowdisplaybreaks[4]
\evensidemargin=\oddsidemargin
\newtheorem{conj}{Conjecture}
\newtheorem{lem}{Lemma}[section]

\theoremstyle{remark}

\theoremstyle{plain}

\begin{document}
	
	\title{Decomposing multigraphs into two Class I subgraphs with prescribed maximum degrees\thanks{This work is supported by National Natural Science Foundation of China (Grant No. 12301433).}}
	
	\author{{\small Yonglei Chen,\ \ Yan Cao \footnote{Corresponding author. Email: ycao@dlut.edu.cn (Yan Cao);  ylchen981127@mail.dlut.edu.cn (Yonglei Chen).}}\\[2mm]
		{\footnotesize School of Mathematical Sciences, Dalian University of Technology, Dalian, Liaoning 116024, China}}
	
	\date{}
	\maketitle
	
	{\flushleft\large\bfseries Abstract.}
	Let $G$ be a finite loopless multigraph with maximum degree $\Delta$ and
	$\chi'(G)=\Delta+k$, where $k\ge0$. We prove that for every pair of positive
	integers $p,q$ satisfying
	$$
	p+q=\Delta+k
	\qquad\text{and}\qquad
	p,q\le\Delta,
	$$
	the graph $G$ admits an edge decomposition into two spanning Class I subgraphs
	$H_p,H_q$ with
	$$
	\Delta(H_p)=p
	\qquad\text{and}\qquad
	\Delta(H_q)=q.
	$$
	
	{\flushleft\large\bf Keywords:} Maximum $t$-edge-colorable subgraph, Chromatic index, Class I graph \vspace{1mm}
	
	\section{Introduction}\label{sec:introduction}
	
	Throughout this paper, graphs are finite and loopless, and parallel edges are
	allowed. For a graph $G$, let $V(G)$ and $E(G)$ denote its vertex set and edge set, respectively. For $v\in V(G)$, let $E_G(v)$ denote the set of edges incident with $v$, and let $d_G(v)=|E_G(v)|$.
	We write $\Delta(G)$ and $\chi'(G)$ for the maximum
	degree and chromatic index of $G$, respectively. We call $G$ \emph{Class I} if
	$\chi'(G)=\Delta(G)$, and \emph{Class II} otherwise. For a positive integer $t$, a $t$-edge-colorable subgraph $F$ of $G$ is called a \emph{maximum $t$-edge-colorable subgraph of $G$}
	if it has as many edges as possible among all $t$-edge-colorable subgraphs of
	$G$.
	
	Maximum edge-colorable subgraphs have been studied from extremal,
	structural, and algorithmic perspectives in several settings
	\cite{AlbertsonHaas1996,AslanyanEtAl2014,KaminskiKowalik2014,
		KarapetyanMkrtchyan2019,Kosowski2009,MkrtchyanPetrosyanVardanyan2010,
		Rizzi2009,Steffen2004}. 
	When a graph is not $t$-edge-colorable, a maximum
	$t$-edge-colorable subgraph describes the largest portion of its edge set
	that can be properly colored with $t$ colors. In the case
	$t=\Delta(G)$, Mkrtchyan and Steffen~\cite{MkrtchyanSteffen2012} studied
	maximum $\Delta(G)$-edge-colorable subgraphs of Class II graphs. They pointed
	out that there are long-standing open problems concerning Class II graphs
	and that even obtaining structural information on such graphs is difficult.
	Their work investigated both the size and structural properties of maximum
	$\Delta(G)$-edge-colorable subgraphs.
	
	In particular, Mkrtchyan and Steffen~\cite{MkrtchyanSteffen2012}  proved that
	every simple Class II graph $G$ has a maximum
	$\Delta(G)$-edge-colorable subgraph $H$ such that
	$E(G)\setminus E(H)$ is a matching. They also showed that every maximum
	$\Delta(G)$-edge-colorable subgraph of a simple graph is itself Class I.
	Thus every simple Class II graph can be decomposed into a maximum
	$\Delta(G)$-edge-colorable subgraph and a matching, both of which are
	Class I. They further conjectured that, whenever
	$\chi'(G)=\Delta(G)+k$, there exists a maximum
	$\Delta(G)$-edge-colorable subgraph whose complement is
	$k$-edge-colorable.
	
	Building on the work of Mkrtchyan and Steffen, Cao et
	al.~\cite{CaoEtAl2023} studied decompositions of Class II graphs into  Class I subgraphs
	with prescribed degree conditions. They first extended the
	above decomposition result from simple graphs to multigraphs. More precisely,
	they proved that if $G$ has multiplicity $\mu$, then $G$ can be decomposed
	into a maximum $\Delta(G)$-edge-colorable subgraph and a subgraph of maximum
	degree at most $\mu$. They further proved that if
	\[
	\chi'(G)=\Delta(G)+k,\qquad k\geq 1,
	\]
	then $G$ can be decomposed into two Class I subgraphs $H_1$ and $H_2$
	satisfying
	\[
	\Delta(H_1)=\Delta(G)
	\qquad\text{and}\qquad
	\Delta(H_2)=k.
	\]
	
	Cao et al.~\cite{CaoEtAl2023} further conjectured that the latter result could be generalized as follows.

\begin{conj}[Cao et al.\ \cite{CaoEtAl2023}]
\label{conj:cao}
Let $G$ be a graph with $\chi'(G)=\Delta(G)+k$,
and let $p,q$ be positive integers satisfying $p+q=\Delta(G)+k$ with
$p,q\le\Delta(G)$. Then $G$ can be decomposed into two Class I subgraphs
$H_1$ and $H_2$ such that $\Delta(H_1)=p$ and $\Delta(H_2)=q$.
\end{conj}

We prove Conjecture~\ref{conj:cao} in the following form.

\begin{restatable}[Main Theorem]{thm}{MainTheorem}
\label{thm:conjecture4.1}
Let $G$ be a finite loopless multigraph with maximum degree $\Delta$ and
$\chi'(G)=\Delta+k$, $k\ge0$. Let $p,q$ be positive integers satisfying
$p+q=\Delta+k$ and $p,q\le\Delta$. Then $G$ has an edge decomposition
$$
E(G)=E(H_p)\mathbin{\dot\cup}E(H_q)
$$
into two spanning Class I subgraphs such that
$$
\Delta(H_p)=p
\qquad\text{and}\qquad
\Delta(H_q)=q.
$$
\end{restatable}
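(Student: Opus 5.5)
Fix an optimal proper edge-colouring $\varphi$ of $G$ with colour classes $M_1,\dots,M_{\Delta+k}$, where $p+q=\Delta+k$. The naive split $H_p=M_1\cup\dots\cup M_p$, $H_q=M_{p+1}\cup\dots\cup M_{p+q}$ already gives a decomposition into a $p$-edge-colourable and a $q$-edge-colourable subgraph. The difficulty is only the maximum degrees. We must have $\Delta(H_p)=p$ and $\Delta(H_q)=q$ exactly, since then each part is Class I: it is $p$- (resp.\ $q$-) edge-colourable and has maximum degree $p$ (resp.\ $q$). So the plan is to choose the colouring, or modify it, so that some vertex sees all colours $1,\dots,p$ and some vertex sees all colours $p+1,\dots,p+q$.

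\textbf{Step 1: extremal choice.} Among all $(\Delta+k)$-edge-colourings, and all choices of which $p$ colours go to $H_p$, pick one maximizing $|E(H_p)|$. Then $H_p$ is a maximum $p$-edge-colourable subgraph of $G$. If $p=\Delta$, the equality $\Delta(H_p)=p$ should follow as in Cao et al.~\cite{CaoEtAl2023}. For general $p$, I would argue directly that $\Delta(H_p)=p$. Take a vertex $v$ with $d_G(v)=\Delta\ge p$ and suppose $d_{H_p}(v)<p$. Then some colour $\alpha\le p$ is missing at $v$, and some colour $\beta>p$ appears at $v$. Consider the $(\alpha,\beta)$-Kempe chain $P$ starting at $v$ with its $\beta$-edge. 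Swapping colours on $P$ keeps $|E(H_p)|$ unchanged, except at the end of $P$ where it changes by $\pm1$. By maximality the swap cannot increase $|E(H_p)|$, and when $P$ has odd length it does increase it, a contradiction. The remaining case is an even path; there I would switch to a different choice of $\beta$ or $v$, using that $v$ has many colours above $p$. This gives a vertex of full $H_p$-degree. The cleaner route is to prove a claim: in a maximum $p$-edge-colourable subgraph $F$, every vertex of $G$-degree at least $p$ has $F$-degree $p$, or at least one such vertex does. This mirrors the Mkrtchyan--Steffen statement that a maximum $\Delta$-edge-colourable subgraph is Class I.

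\textbf{Step 2: the other side.} Symmetrically, we need a vertex $w$ with $d_{H_q}(w)=q$. Choose the colouring to be lexicographically extremal: first maximize $|E(H_p)|$, then, subject to that, maximize the number of vertices with full $H_q$-degree. If every vertex has $d_{H_q}<q$, then $H_q$ is $(q-1)$-edge-colourable only if Class~I-type arguments hold. Vizing-type bounds alone do not give this for multigraphs, so instead I use the global count. If no vertex saw all $q$ of the high colours, I would recolour so as to merge two high colour classes along Kempe chains with a low colour. The goal is to show that $G$ could then be coloured with $\Delta+k-1$ colours, contradicting $\chi'(G)=\Delta+k$. Concretely, suppose every vertex misses some high colour. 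I would try to empty one high class $M_\gamma$ by repeatedly recolouring each of its edges $xy$ using colours missing at $x$ and at $y$, via fans or Kempe chains as in Vizing's adjacency/Tashkinov-tree arguments. Each such move must not destroy maximality of $H_p$, or at least must keep $\Delta(H_p)=p$.

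\textbf{Main obstacle.} The hard part is making both conditions hold simultaneously, especially for multigraphs, where Kempe swaps and fan recolourings are weaker. I would first try to handle Step 2 by induction on $k$ (or on $q$). Delete a maximum matching-like colour class $M$ that meets every vertex of maximum degree of $H_q$. The reduced graph has chromatic index one less, so we can apply induction with $q-1$ and then add $M$ back to $H_q$; this raises its maximum degree to exactly $q$ while $H_q\cup M$ stays $q$-colourable. The delicate point is choosing $M$, a colour class of some optimal colouring, so that $\Delta$ and the degree bookkeeping of $G-M$ still satisfy the hypotheses $p,q-1\le\Delta(G-M)$. I expect this induction base and the selection of $M$ to be where most of the work lies, using the $p=\Delta$ result of Cao et al.\ as the base case.
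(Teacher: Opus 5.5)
\paragraph{There is a genuine gap.} The proposal never proves the theorem's actual content.

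Step 1 is vacuous. For any vertex $v$ with $d_G(v)\ge p$ you can let $S$ be any $p$ colours present at $v$, with no extremal choice or Kempe swaps needed. Your assertion that maximizing $|E(H_p)|$ over $(\Delta+k)$-colourings yields a maximum $p$-edge-colourable subgraph is also unjustified. It would require some maximum $p$-edge-colourable subgraph to have a $q$-edge-colourable complement, which is the kind of statement Mkrtchyan and Steffen only conjectured.

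The whole difficulty is that one and the same partition $[r]=S\mathbin{\dot\cup}T$ must work on both sides. You need a vertex $x$ with $S\subseteq\varphi(x)$ \emph{and} a vertex $y$ with $\bar\varphi(y)\subseteq S$. Step 2 never addresses this coupling. It lists strategies (lexicographic extremality, emptying a high colour class by fans or Tashkinov trees while somehow keeping $\Delta(H_p)=p$) without carrying any of them out.

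The induction you fall back on is circular. To get $\Delta(H'_{q-1}\cup M)=q$, $M$ must meet a vertex of $H'_{q-1}$-degree $q-1$. But $H'_{q-1}$ is produced by the inductive hypothesis applied to $G-M$ after $M$ is fixed, and nothing lets you control where its maximum-degree vertices lie.

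\paragraph{The missing idea: reduce the coupling to a condition on two vertices.} Suppose that under some proper $r$-edge-colouring $\psi$ there are vertices $x,y$ with $\bar\psi(x)\cap\bar\psi(y)=\varnothing$, $d_G(x)\ge p$ and $d_G(y)\ge q$. Then $|\bar\psi(y)|\le p\le|\psi(x)|$, so you can choose $S$ with $\bar\psi(y)\subseteq S\subseteq\psi(x)$ and $|S|=p$. Now $x$ has full degree in $H_p$ and $y$ has full degree in $H_q$, so both parts are Class I with the required maximum degrees.

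The paper extracts such a pair from an edge $uv$ of a smallest colour class over all $r$-colourings. Minimality, together with $\chi'(G)=r$, forces three things:
\begin{enumerate}
\item $\bar\varphi(u)\cap\bar\varphi(v)=\varnothing$;
\item $d_G(u)+d_G(v)\ge r+1$;
\item $u$ and $v$ are $(\alpha,\beta)$-linked for all $\alpha\in\bar\varphi(u)$ and $\beta\in\bar\varphi(v)$.
\end{enumerate}
If $(u,v)$ itself fails the degree thresholds, there are two cases.
\begin{enumerate}
\item If $d_G(u)\le p-1$ and $d_G(v)\ge q+2$, then the $\beta$-neighbour of $u$ pairs with $v$.
\item Otherwise $p+2\le d_G(u)\le d_G(v)\le q-1$. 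Take $z$ of maximum degree, $a\in\bar\theta(u)\cap\bar\theta(z)$, and $b\in\bar\theta(v)\setminus\bar\theta(z)$; such a $b$ exists by counting. Then $z$ is an end of its $(a,b)$-chain, so that chain is not the $u$--$v$ path. Swapping it reduces $|\bar\theta(u)\cap\bar\theta(z)|$, and iterating yields the pair $(u,z)$.
\end{enumerate}
This is where $\chi'(G)=\Delta+k$ is really used: locally, at a single edge of a minimum colour class. Your sketch tries instead to empty a whole colour class and gives no mechanism for doing so.
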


~

Theorem~\ref{thm:conjecture4.1} strengthens the result of Cao et al. \cite{CaoEtAl2023}, which guarantees a decomposition into two Class I subgraphs with maximum degrees $\Delta(G)$ and $k$. Our result shows that the maximum degrees of the two subgraphs can be prescribed arbitrarily, subject to $p+q=\chi'(G)$ and $p,q\le\Delta(G)$. 

For the proofs, we use the following notation. For a positive integer $r$, let $[r]=\{1,2,\ldots,r\}$. Given a proper $r$-edge-coloring $\theta$ of $G$
and $v\in V(G)$, let $\theta(v)=\{\theta(e):e\in E_G(v)\}$ and $\bar{\theta}(v)=[r]\setminus\theta(v)$ denote the sets of colors present and missing at a vertex $v$, respectively. We say that $v$ sees a color $c\in[r]$ if $c\in\theta(v)$, and that $v$ misses $c$ if $c\in\bar{\theta}(v)$.

For distinct colors $\alpha,\beta\in[r]$, let $G_\theta(\alpha,\beta)$ denote the spanning subgraph of $G$ consisting of the edges colored $\alpha$ or $\beta$
under $\theta$. The components of this subgraph are
called $(\alpha,\beta)$-chains. For $v\in V(G)$, let $P_v(\alpha,\beta,\theta)$ denote the $(\alpha,\beta)$-chain containing $v$.
 Two vertices $x$ and $y$ are said to be
\emph{$(\alpha,\beta)$-linked under $\theta$} if they belong to the same component of $G_\theta(\alpha,\beta)$. Interchanging the colors $\alpha$ and $\beta$ on such a chain is called a Kempe change.

	\section{Lemmas}
	
	Throughout this section, put $\Delta=\Delta(G)$ and $r=\chi'(G)=\Delta+k$, where $k\ge 1$.
	Let $p,q$ be positive integers satisfying $p+q=r$ and $p, q\le\Delta.$
	Assume without loss of generality that $p\le q$. Then $q\ge p=r-q=\Delta+k-q\ge k$.
	
	Among all pairs $(\varphi,c)$, where $\varphi$ is a proper $r$-edge-coloring of $G$ and $c\in [r]$, choose one minimizing $$|\varphi^{-1}(c)|,$$ where $\varphi^{-1}(c)=\{e\in E(G):\varphi(e)=c\}$.
	By renaming colors, we may let $c=r$ and put
	$$E_r(\varphi)=\varphi^{-1}(r).$$
	Thus $E_r(\varphi)$ has minimum cardinality among all color classes of all
	proper $r$-edge-colorings of $G$. We write $E_r$ for $E_r(\varphi)$ whenever
	the coloring is clear from the context.
	
	\begin{lem}
		\label{lem:minclass-rigidity}
		The set $E_r$ is nonempty. More generally, let $\theta$ be any proper $r$-edge-coloring of $G$ such that $|E_r(\theta)|=|E_r|$, where $E_r(\theta)=\theta^{-1}(r)$, and let $uv\in E_r(\theta)$.
		Then the following hold:
		\begin{enumerate}[label=\textup{(\roman*)}]
			\item $\bar{\theta}(u)\cap \bar{\theta}(v)=\varnothing;$
			\item $d_G(u)+d_G(v)\ge r+1;$
			\item for every $\alpha\in \bar{\theta}(u)$ and $\beta\in \bar{\theta}(v)$, the vertices
			$u$ and $v$ are $(\alpha,\beta)$-linked under $\theta$.
		\end{enumerate}
	\end{lem}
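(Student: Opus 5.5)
Every part of the statement will follow from the minimality of $|E_r|$: if we can recolor so that some edge of $E_r(\theta)$ leaves color $r$ and no other edge enters color $r$, we get a proper $r$-edge-coloring with a strictly smaller color class $r$. That contradicts the choice of $(\varphi,c)$, since $|E_r(\theta)|=|E_r|$ is the global minimum.

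Nonemptiness of $E_r$ comes first. If $E_r=\varnothing$, then $\varphi$ uses only the colors $1,\dots,r-1$. It is therefore a proper $(r-1)$-edge-coloring of $G$, which contradicts $\chi'(G)=r$. So $E_r\neq\varnothing$, and in particular some $\theta$ with $|E_r(\theta)|=|E_r|$ and an edge $uv\in E_r(\theta)$ exist.

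For (i), I argue by contradiction. Suppose some color $\gamma\in\bar\theta(u)\cap\bar\theta(v)$. Then $\gamma\ne r$, because both $u$ and $v$ see $r$ on the edge $uv$. Recolor $uv$ with $\gamma$; the coloring stays proper because $\gamma$ is missing at both ends. This lowers $|\theta^{-1}(r)|$ by one, a contradiction.

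For (ii), count missing colors. Each of $u,v$ sees exactly $d_G(\cdot)$ colors, so $|\bar\theta(u)|=r-d_G(u)$ and $|\bar\theta(v)|=r-d_G(v)$. Both sets avoid $r$, so they lie in $[r-1]$, and by (i) they are disjoint. Hence $(r-d_G(u))+(r-d_G(v))\le r-1$, which rearranges to $d_G(u)+d_G(v)\ge r+1$.

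For (iii), take $\alpha\in\bar\theta(u)$ and $\beta\in\bar\theta(v)$. By (i), $\alpha\ne\beta$; both differ from $r$, and $\alpha\in\theta(v)$, $\beta\in\theta(u)$. Suppose $u$ and $v$ are not $(\alpha,\beta)$-linked. Since $u$ misses $\alpha$, the chain $P=P_u(\alpha,\beta,\theta)$ has $u$ as an endpoint, so it is a path (or just $u$), and it avoids $v$. Swap $\alpha$ and $\beta$ on $P$ to get $\theta'$. This is a Kempe change, so $\theta'$ is proper, and it leaves color $r$ unchanged, so $|E_r(\theta')|=|E_r|$. Now $u$ misses $\beta$ under $\theta'$, and because $v\notin P$, $v$ still misses $\beta$. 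Thus $\beta\in\bar{\theta'}(u)\cap\bar{\theta'}(v)$ with $uv\in E_r(\theta')$, which contradicts (i) applied to $\theta'$. This is why (i) is stated for arbitrary $\theta$ rather than only for $\varphi$.

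The only point needing care is that the chain really misses $v$. That is exactly the non-linkage assumption. It is legitimate even in multigraphs, since the $(\alpha,\beta)$-components are paths or even cycles, and the component of a vertex missing $\alpha$ is a path ending there.
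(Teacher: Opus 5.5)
Your proof is correct and follows the paper's argument essentially step for step. You recolor $uv$ for (i), count missing colors inside $[r-1]$ for (ii), and for (iii) perform a Kempe change on $P_u(\alpha,\beta,\theta)$ and then apply (i) to the new coloring. The only cosmetic difference is in (i): the paper separately treats $|E_r(\theta)|=1$ via $\chi'(G)=r$, whereas you rightly note that a smaller (possibly empty) color-$r$ class already contradicts the minimality of $|E_r|$.
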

	
	\begin{proof}
		Note that $E_r\ne\varnothing$ since $\chi'(G)=r$.
		
		For (i), suppose that $\alpha\in \bar{\theta}(u)\cap \bar{\theta}(v)$. Since $\theta(uv)=r$, we have $\alpha\ne r$. Recolor $uv$ with $\alpha$. If $|E_r(\theta)|\ge2$, the resulting proper $r$-edge-coloring has a
		color-$r$ class of size $|E_r(\theta)|-1$, contradicting the
		minimality of $|E_r|$. If $|E_r(\theta)|=1$, then we obtain a
		proper $(r-1)$-edge-coloring, contradicting $r=\chi'(G)$. Hence
		$\bar{\theta}(u)\cap \bar{\theta}(v)=\varnothing$.
		
		For (ii), $r\notin \bar{\theta}(u)\cup\bar{\theta}(v)$. By (i), $|\bar{\theta}(u)|+|\bar{\theta}(v)|\le r-1$.
		Since $|\bar{\theta}(u)|=r-d_G(u)$ and $|\bar{\theta}(v)|=r-d_G(v)$,
		we obtain $d_G(u)+d_G(v)\ge r+1$.
		
		For (iii), suppose that $u$ and $v$ are not $(\alpha,\beta)$-linked for some $\alpha\in \bar{\theta}(u)$ and $\beta\in \bar{\theta}(v)$.
		Interchange $\alpha$ and $\beta$ on the $(\alpha,\beta)$-component
		containing $u$. Let $\theta'$ be the resulting proper $r$-edge-coloring. Since $v$ lies in a different component, it is unchanged
		and still misses $\beta$, while $u$ now also misses $\beta$. Since $r\notin\{\alpha, \beta \}$, $E_r(\theta')=E_r(\theta)$. Moreover, $\beta\in \bar{\theta'}(u)\cap\bar{\theta'}(v)$, contrary to (i).
	\end{proof}

	\begin{lem}
		\label{lem:threshold}
		Let $\psi$ be a proper $r$-edge-coloring of $G$. Suppose vertices $x,y$
		satisfy $\bar{\psi}({x})\cap\bar{\psi}({y})=\varnothing$, $d_G(x)\ge p$ and $d_G(y)\ge q$.
		Then $G$ has an edge decomposition
		$$
		E(G)=E(H_p)\mathbin{\dot\cup}E(H_q)
		$$
		into two spanning Class I subgraphs satisfying
		$$
		\Delta(H_p)=p
		\qquad\text{and}\qquad
		\Delta(H_q)=q.
		$$
	\end{lem}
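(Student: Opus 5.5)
The plan is to read the decomposition off the color classes of $\psi$ directly. I will split the color set $[r]$ into two parts $A$ and $B=[r]\setminus A$ with $|A|=p$ and $|B|=q$, and set
$$
H_p=G[\psi^{-1}(A)],\qquad H_q=G[\psi^{-1}(B)],
$$
both taken as spanning subgraphs on $V(G)$. They are edge-disjoint and together cover $E(G)$. The restriction of $\psi$ gives a proper $p$-edge-coloring of $H_p$ and a proper $q$-edge-coloring of $H_q$. Hence $\Delta(H_p)\le\chi'(H_p)\le p$ and $\Delta(H_q)\le\chi'(H_q)\le q$.

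So it remains to choose $A$ so that some vertex has degree exactly $p$ in $H_p$ and some vertex has degree exactly $q$ in $H_q$. Then both maximum degrees are attained, and each subgraph is Class I because $\chi'=\Delta$. The natural witnesses are $x$ for $H_p$ and $y$ for $H_q$. Since every color appears at most once at a vertex, $d_{H_p}(x)=|A\cap\psi(x)|$ and $d_{H_q}(y)=|B\cap\psi(y)|$. Therefore the requirements are
$$
A\subseteq\psi(x)\qquad\text{and}\qquad B\subseteq\psi(y).
$$
The second condition is equivalent to $\bar\psi(y)\subseteq A$. So I need a $p$-set $A$ with $\bar\psi(y)\subseteq A\subseteq\psi(x)$.

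This is the only point that needs checking, and it uses exactly the three hypotheses.
\begin{enumerate}[label=\textup{(\alph*)}]
\item The disjointness $\bar\psi(x)\cap\bar\psi(y)=\varnothing$ gives $\bar\psi(y)\subseteq\psi(x)$.
\item Next, $|\bar\psi(y)|=r-d_G(y)\le r-q=p$.
\item Finally, $|\psi(x)|=d_G(x)\ge p$.
\end{enumerate}
Hence I can extend $\bar\psi(y)$ by $p-|\bar\psi(y)|$ further colors from $\psi(x)\setminus\bar\psi(y)$ to obtain $A$. With this choice, $x$ sees all $p$ colors of $A$, so $d_{H_p}(x)=p$. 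Also $B$ is disjoint from $\bar\psi(y)$, so $y$ sees all $q$ colors of $B$, giving $d_{H_q}(y)=q$.

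This completes the argument. I expect no real obstacle: the only delicate point is this counting and inclusion check, and the Class I property comes for free from the inherited colorings.
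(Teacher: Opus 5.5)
Your proposal is correct and is essentially the paper's own proof. The paper also chooses a $p$-set $S$ of colors with $\bar\psi(y)\subseteq S\subseteq[r]\setminus\bar\psi(x)=\psi(x)$, takes $H_p$ and $H_q$ to be the unions of the color classes in $S$ and in its complement, and uses $d_{H_p}(x)=p$, $d_{H_q}(y)=q$ together with the inherited colorings to conclude that both subgraphs are Class I, via the same counting you give.
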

	
	\begin{proof}
		Let $X=\bar{\psi}(x)$ and $Y=\bar{\psi}(y)$.
		Since $X\cap Y=\varnothing$, $Y\subseteq[r]\setminus X$.	
		Moreover, $|Y|=r-d_G(y)\le r-q=p$
		and $|[r]\setminus X|=d_G(x)\ge p$.
		Hence there exists $S\subseteq[r]$ such that $Y\subseteq S\subseteq[r]\setminus X$ and $|S|=p$.
		Let $T=[r]\setminus S$.
		Then $|T|=q$.
		
		Let $H_p$ be the spanning subgraph formed by the color classes in $S$, and
		let $H_q$ be the spanning subgraph formed by the color classes in $T$.
		Then
		$$
		E(G)=E(H_p)\mathbin{\dot\cup}E(H_q).
		$$
		Since $H_p$ is the union of $p$ color classes, $\chi'(H_p)\le p$ and $\Delta(H_p)\le p$.
		Because $S\subseteq[r]\setminus X$, every color in $S$ appears at $x$; and so $d_{H_p}(x)=p$.
		Thus
		$$
		\chi'(H_p)=\Delta(H_p)=p.
		$$
		
		Similarly, $Y\subseteq S$ gives $T\cap Y=\varnothing$, so every color in
		$T$ appears at $y$. Hence $d_{H_q}(y)=q$, 
		and therefore
		$$
		\chi'(H_q)=\Delta(H_q)=q.
		$$
	\end{proof}
	
	For the remainder of this section, suppose, to the contrary, that
	Theorem~\ref{thm:conjecture4.1} is false. Fix $\varphi$ and the minimum
	color class $E_r$ as above.
	
	\begin{lem}
		\label{lem:middle}
		Let $uv\in E_r$, relabelled so that $d_G(u)\le d_G(v)$. Then
		$$
		p+2\le d_G(u)\le d_G(v)\le q-1.
		$$
		In particular,
		$$
		p\le d_G(u),d_G(v)\le q-1.
		$$
	\end{lem}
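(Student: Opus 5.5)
The plan is to derive both inequalities from Lemma~\ref{lem:threshold}. Lemma~\ref{lem:minclass-rigidity}(i) says that $\bar{\varphi}(u)\cap\bar{\varphi}(v)=\varnothing$, so the pair $(u,v)$, or a pair obtained from it by recolouring, will serve as the pair $(x,y)$ of Lemma~\ref{lem:threshold}. Since we are assuming Theorem~\ref{thm:conjecture4.1} is false, no such pair can meet the degree hypotheses of Lemma~\ref{lem:threshold}, and this gives the bounds.

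\emph{Step 1 (the easy reduction).} Suppose $d_G(v)\ge q$. If also $d_G(u)\ge p$, apply Lemma~\ref{lem:threshold} with $\psi=\varphi$, $x=u$, $y=v$. This gives the forbidden decomposition, a contradiction.

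Next suppose $d_G(u)\le p+1$. By Lemma~\ref{lem:minclass-rigidity}(ii),
\[
d_G(v)\ge r+1-d_G(u)\ge r-p=q,
\]
and we are back in the previous situation. Hence both the lower bound $d_G(u)\ge p+2$ and the upper bound $d_G(v)\le q-1$ reduce to excluding one configuration: $d_G(u)\le p-1$, which by Lemma~\ref{lem:minclass-rigidity}(ii) forces $d_G(v)\ge q+2$. The final "in particular" statement then follows from $p\le p+2\le d_G(u)\le d_G(v)$.

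\emph{Step 2 (the main obstacle).} Assume $d_G(u)\le p-1$. Then $|\bar{\varphi}(u)|\ge r-p+1=q+1$, and $v$ sees every colour in $\bar{\varphi}(u)$. The vertex $v$ has degree at least $q+2\ge p$, so it can play either role in Lemma~\ref{lem:threshold}. What we need is a second vertex $w$ with $d_G(w)\ge q$ (if $v$ plays $x$) or $d_G(w)\ge p$ (if $v$ plays $y$), together with a colouring $\theta$ in which $\bar\theta(w)\cap\bar\theta(v)=\varnothing$.

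The natural approach is to transfer the colour-$r$ edge from $uv$ to an edge $vw$ without changing $|E_r|$. Take $\alpha\in\bar{\varphi}(u)$ and let $vw$ be the edge of colour $\alpha$ at $v$. Recolour $uv$ with $\alpha$ and $vw$ with $r$. This is proper provided $w$ misses $r$. If $w$ sees $r$, first perform a Kempe change on an $(r,\gamma)$-chain at $w$ that avoids $u$ and $v$, or use the linkage in Lemma~\ref{lem:minclass-rigidity}(iii) to pick a better $\alpha$. The new colouring $\theta$ satisfies $|E_r(\theta)|=|E_r|$, so Lemma~\ref{lem:minclass-rigidity} applies to the edge $vw$. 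In particular $\bar\theta(v)\cap\bar\theta(w)=\varnothing$ and $d_G(w)\ge r+1-d_G(v)$.

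\emph{Step 3 (closing by counting).} Now apply Step 1 to $vw$. If $d_G(w)\ge p$, Lemma~\ref{lem:threshold} applied to $(w,v)$ finishes. So every such $w$ must have degree at most $p-1$.

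There are at least $q+1$ choices of $\alpha$, giving distinct $\alpha$-neighbours of $v$ (possibly via parallel edges). I would combine this with Lemma~\ref{lem:minclass-rigidity}(iii): for $\alpha\in\bar\varphi(u)$ and $\beta\in\bar\varphi(v)$, the $(\alpha,\beta)$-chain runs from $u$ to $v$. The aim is a Kempe change that makes $w$ and $u$ share a missing colour, or that produces a colour class of colour $r$ strictly smaller than $|E_r|$, contradicting the minimality of $|E_r|$.

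I expect this last step, Steps~2--3, to be the real difficulty: ensuring that the recolouring is proper and that it genuinely contradicts minimality. The reduction in Step~1 is routine.
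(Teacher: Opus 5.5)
Your Step~1 is correct and matches the paper's reduction: Lemma~\ref{lem:threshold} together with Lemma~\ref{lem:minclass-rigidity}(ii) leaves only the configuration $d_G(u)\le p-1$, $d_G(v)\ge q+2$. \textbf{The gap is that Steps~2--3 never exclude this configuration, and the route you chose cannot do so.}

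Your candidate $w$ is the $\alpha$-neighbour of $v$ for $\alpha\in A=\bar\varphi(u)$. Lemma~\ref{lem:minclass-rigidity}(iii) only shows that $w$ is internal on the $(\alpha,\beta)$-paths for $\beta\in B$. This gives $d_G(w)\ge |B|+1=r-d_G(v)+1$. In the case at hand that bound is at most $r-(q+2)+1=p-1$, so it can never give $d_G(w)\ge p$. After shifting colour $r$ onto $vw$ (and you leave properness unresolved when $w$ sees $r$), you conclude $d_G(w)\le p-1$. That just reproduces the same bad configuration on the edge $vw$. Nothing decreases, and the final ``Kempe change that makes $w$ and $u$ share a missing colour'' is never specified.

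The missing idea is to work on $u$'s side instead. Take $\beta\in B$ and the $\beta$-edge $ux_\beta$ at $u$. It exists because $A\cap B=\varnothing$, and $x_\beta\ne v$ because $v$ misses $\beta$.
\begin{itemize}
\item For every $\alpha\in A$, the $(\alpha,\beta)$-path from $u$ to $v$ begins with $ux_\beta$. So $x_\beta$ is internal on it and sees $\alpha$.
\item Hence $d_G(x_\beta)\ge |A|+1\ge q+2$. This is large precisely because $d_G(u)$ is small.
\end{itemize}
Next show $\bar\varphi(x_\beta)\cap B=\varnothing$. Suppose $\gamma$ lay in both, and fix $\alpha\in A$.
\begin{itemize}
\item $x_\beta$ has degree one in $G_\varphi(\alpha,\gamma)$, so it lies off the $(\alpha,\gamma)$-path $P$ from $u$ to $v$.
\item Swap $\alpha,\gamma$ on $P$. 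Now $u$ misses $\gamma$, $x_\beta$ still misses $\gamma$, and $v$ still misses $\beta$.
\item Recolour $ux_\beta$ with $\gamma$ and then $uv$ with $\beta$. This lowers $|E_r|$, or yields an $(r-1)$-colouring, contradicting minimality either way.
\end{itemize}
Finally, Lemma~\ref{lem:threshold} applies directly to $(x,y)=(x_\beta,v)$ under $\varphi$. No transfer of the colour-$r$ edge is needed.
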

	
	\begin{proof}
		Since the theorem is assumed false, Lemma~\ref{lem:threshold} cannot apply
		to the endpoints of $uv$. By Lemma~\ref{lem:minclass-rigidity}(i), $\bar{\varphi}(u)\cap \bar{\varphi}(v)=\varnothing$. Hence it is impossible that both
		$$
		d_G(u)\ge p
		\qquad\text{and}\qquad
		d_G(v)\ge q.
		$$
		Therefore $d_G(u)\le p-1$ or $d_G(v)\le q-1$.
		
		By Lemma~\ref{lem:minclass-rigidity}(ii),
		$d_G(u)+d_G(v)\ge r+1=p+q+1.$
		Consequently, exactly one of the following two alternatives holds:
		\begin{enumerate}[label=\textup{(\roman*)}]
			\item $$d_G(u)\le p-1,
			\qquad
			d_G(v)\ge q+2;$$
			\item
			$$
			p+2\le d_G(u)\le d_G(v)\le q-1.
			$$
		\end{enumerate}
		It remains to exclude (i).
		
		Suppose, to the contrary, that $d_G(u)\le p-1$ and $d_G(v)\ge q+2$.
		Let $A=\bar{\varphi}(u)$ and $B=\bar{\varphi}(v)$.
		The set $B$ is nonempty because $d_G(v)\le\Delta<r$. Choose $\beta\in B$.
		Since $A\cap B=\varnothing$, $u$ sees $\beta$. Let
		$ux_\beta$ be the unique edge incident with $u$ that is colored $\beta$. Since $v$ misses
		$\beta$, $x_\beta\ne v$.
		
		Fix $\alpha\in A$. By Lemma~\ref{lem:minclass-rigidity}(iii), the vertices
		$u,v$ are $(\alpha,\beta)$-linked. Their common $(\alpha,\beta)$-component is a path joining $u$ and $v$ and contains $ux_\beta$. Since $x_\beta\ne v$, the vertex $x_\beta$ is internal on that path and hence is incident with an edge colored $\alpha$. This holds for every $\alpha\in A$. Therefore
		$$
		d_G(x_\beta)\ge |A|+1=r-d_G(u)+1\ge q+2.
		$$
		
		We claim $\bar{\varphi}(x_\beta)\cap B=\varnothing$.
		Suppose, to the contrary, that there exists $\gamma\in\bar{\varphi}(x_\beta)\cap B$.
		Since $x_\beta$ is incident with an edge colored $\beta$, $\gamma\ne\beta$.
		Choose $\alpha\in A$. By Lemma~\ref{lem:minclass-rigidity}(iii), the
		vertices $u,v$ lie in one $(\alpha,\gamma)$-component, which is a path $P$ joining $u$ and $v$. The vertex $x_\beta$ is incident with an edge colored $\alpha$ and misses $\gamma$, so it has degree one in $G_\varphi(\alpha,\gamma)$. Since $x_\beta\notin\{u,v\}$, it cannot be
		internal on $P$. Thus $x_\beta\notin V(P)$.
		Interchange $\alpha,\gamma$ on $P$. Then $\gamma$ is missing at $u$, while
		$x_\beta$ is unchanged and still misses $\gamma$. Recolor $ux_\beta$ by $\gamma$. 
		Now $\beta$ is missing at $u$, while $v$ still misses $\beta$. Recolor $uv$ by $\beta$.
		The result is a proper $r$-edge-coloring with one fewer edge of color $r$,
		contradicting the minimality of $|E_r|$. Therefore $\bar{\varphi}(x_\beta)\cap B=\varnothing$.
		
		Now
		$$
		d_G(x_\beta)\ge q+2\ge p,
		\qquad
		d_G(v)\ge q+2\ge q,
		$$
		and
		$$
		\bar{\varphi}(x_\beta)\cap\bar{\varphi}(v)=\varnothing.
		$$
		Lemma~\ref{lem:threshold} applies to $x_\beta,v$, a contradiction. Hence (i) is impossible, and (ii) follows.
	\end{proof}
	
	Fix $uv\in E_r(\varphi)$ and let $A=\bar{\varphi}(u)$ and $B=\bar{\varphi}(v)$.
	Choose $z$ with $d_G(z)=\Delta$. We have $|\bar{\varphi}(z)|=r-\Delta=k$.
	
	We first note that $r\in\bar{\varphi}(z)$.
	Otherwise $z$ is incident with an edge $zw\in E_r(\varphi)$. But Lemma~\ref{lem:middle}, applied to $zw$, gives $d_G(z)\le q-1$, contrary to $d_G(z)=\Delta\ge q$.
	
	\begin{lem}
		\label{lem:blocker-setup}
		Let $\theta$ be a proper $r$-edge-coloring of $G$ satisfying $E_r(\theta)=E_r(\varphi)$, $\bar{\theta}(u)=A$, $\bar{\theta}(v)=B$ and $r\in\bar{\theta}(z)$.
		Then
		$$
		|B\setminus\bar{\theta}(z)|
		\ge
		|A\cap\bar{\theta}(z)|+2.
		$$
		In particular, if $A\cap\bar{\theta}(z)\ne\varnothing$, then $B\setminus\bar{\theta}(z)\ne\varnothing$.
	\end{lem}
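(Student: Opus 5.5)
The plan is a pure counting argument; I do not expect to need any Kempe changes. Write $Z=\bar{\theta}(z)$. The idea is to rewrite the target inequality so that it involves only $|B|$ and $|(A\cup B)\cap Z|$, and then bound these two quantities separately using Lemma~\ref{lem:middle} and the fact that $r\in Z$.

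\textbf{Rewriting the target.} By Lemma~\ref{lem:minclass-rigidity}(i), applied to $\varphi$ and the edge $uv\in E_r$, we have $A\cap B=\varnothing$. Hence $A\cap Z$ and $B\cap Z$ are disjoint, and
$$
|B\setminus Z|=|B|-|B\cap Z|.
$$
So the claimed inequality $|B\setminus Z|\ge |A\cap Z|+2$ is equivalent to
$$
|B|\ \ge\ |B\cap Z|+|A\cap Z|+2=|(A\cup B)\cap Z|+2 .
$$

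\textbf{Upper bound on the intersection.} Since $uv$ has colour $r$ under $\varphi$, the colour $r$ lies in neither $A$ nor $B$. By hypothesis $r\in Z$. Also $d_G(z)=\Delta$ gives $|Z|=r-\Delta=k$. Therefore $(A\cup B)\cap Z\subseteq Z\setminus\{r\}$, and so
$$
|(A\cup B)\cap Z|\le k-1.
$$

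\textbf{Lower bound on $|B|$.} Lemma~\ref{lem:middle} gives $d_G(v)\le q-1$. Hence
$$
|B|=r-d_G(v)\ge r-q+1=p+1.
$$
From the standing assumption $p\ge k$ recorded at the start of this section, this yields $|B|\ge k+1$.

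\textbf{Conclusion and remaining checks.} Combining the two bounds gives $|B|\ge k+1\ge |(A\cup B)\cap Z|+2$, which is the rewritten form of the claim. The "in particular" statement is then immediate, because $|A\cap Z|\ge 0$ already forces $|B\setminus Z|\ge 2$. The only points that need care are bookkeeping. First, the degree bound on $v$ is applied to the original edge $uv\in E_r(\varphi)$, which is legitimate since $E_r(\theta)=E_r(\varphi)$. Second, $A$ and $B$ are the same sets under $\theta$ as under $\varphi$, by the hypothesis of the lemma.
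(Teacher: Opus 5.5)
Your proposal is correct and matches the paper's proof: you combine $A\cap B=\varnothing$ and $r\in\bar{\theta}(z)\setminus(A\cup B)$ to get $|A\cap\bar{\theta}(z)|+|B\cap\bar{\theta}(z)|\le k-1$, and Lemma~\ref{lem:middle} with $p\ge k$ to get $|B|\ge p+1\ge k+1$. Together these give $|B\setminus\bar{\theta}(z)|=|B|-|B\cap\bar{\theta}(z)|\ge|A\cap\bar{\theta}(z)|+2$, which is the paper's argument.
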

	
	\begin{proof}
		Since $d_G(z)=\Delta$, $|\bar{\theta}(z)|=r-\Delta=k$.
		Moreover, $uv\in E_r(\theta)$, and hence $r\notin A\cup B$.
		By Lemma~\ref{lem:minclass-rigidity}(i), $A\cap B=\varnothing$.
		Since $r\in\bar{\theta}(z)$, it follows that
		$$
		|A\cap\bar{\theta}(z)|
		+
		|B\cap\bar{\theta}(z)|
		\le k-1.
		$$
		
		On the other hand, Lemma~\ref{lem:middle} gives $d_G(v)\le q-1$.
		Therefore $|B|=r-d_G(v)\ge r-(q-1)=p+1\ge k+1$,
		where the last inequality follows from $p\ge k$. Consequently,
		$$
		\begin{aligned}
			|B\setminus\bar{\theta}(z)|
			&=
			|B|-|B\cap\bar{\theta}(z)|\\
			&\ge
			k+1-|B\cap\bar{\theta}(z)|\\
			&\ge
			|A\cap\bar{\theta}(z)|+2.
		\end{aligned}
		$$
		
	\end{proof}
	
	\begin{lem}
		\label{lem:blocker-elimination}
		There exists a proper $r$-edge-coloring $\psi$ satisfying $
		E_r(\psi)=E_r(\varphi)$, $\bar{\psi}(u)=A$, $\bar{\psi}(v)=B$, $r\in\bar{\psi}(z)$ and $A\cap\bar{\psi}(z)=\varnothing$.
	\end{lem}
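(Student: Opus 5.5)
The plan is an extremal argument. Among all proper $r$-edge-colorings $\theta$ of $G$ with $E_r(\theta)=E_r(\varphi)$, $\bar{\theta}(u)=A$, $\bar{\theta}(v)=B$ and $r\in\bar{\theta}(z)$, choose one minimizing $|A\cap\bar{\theta}(z)|$. This family is nonempty, since $\varphi$ itself belongs to it: we showed above that $r\in\bar{\varphi}(z)$. If the minimum is $0$, this coloring is the desired $\psi$. So suppose $A\cap\bar{\theta}(z)\ne\varnothing$, and derive a contradiction by finding a coloring in the family with a smaller intersection.

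Fix $\alpha\in A\cap\bar{\theta}(z)$. By Lemma~\ref{lem:blocker-setup}, $B\setminus\bar{\theta}(z)\ne\varnothing$, so fix $\beta\in B\setminus\bar{\theta}(z)$. Then $z$ misses $\alpha$ and sees $\beta$. Hence $z$ has degree exactly $1$ in $G_\theta(\alpha,\beta)$, and $P_z(\alpha,\beta,\theta)$ is a path with $z$ as an endpoint.

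By Lemma~\ref{lem:minclass-rigidity}(iii), applied to $\theta$ (valid since $|E_r(\theta)|=|E_r|$), the vertices $u$ and $v$ are $(\alpha,\beta)$-linked. Since $u$ misses $\alpha$ and $v$ misses $\beta$, both have degree at most $1$ in $G_\theta(\alpha,\beta)$. Their common component is therefore a path $Q$ with endpoints $u$ and $v$. We also have $z\notin\{u,v\}$, because $u$ and $v$ are incident with $uv\in E_r(\theta)$ and so see $r$, while $z$ misses $r$. (Alternatively, $d_G(z)=\Delta\ge q>d_G(u),d_G(v)$ by Lemma~\ref{lem:middle}.) Since $z$ has degree $1$ in $G_\theta(\alpha,\beta)$, it is not an internal vertex of $Q$. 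Thus $z\notin V(Q)$, and $P_z(\alpha,\beta,\theta)$ is disjoint from $Q$.

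Now let $\theta'$ be obtained from $\theta$ by interchanging $\alpha$ and $\beta$ on $P_z(\alpha,\beta,\theta)$. We check that $\theta'$ lies in the family:
\begin{itemize}
\item $\theta'$ is proper.
\item Since $r\notin\{\alpha,\beta\}$, we have $E_r(\theta')=E_r(\varphi)$, and $r$ is still missing at $z$.
\item Since $u,v\notin P_z(\alpha,\beta,\theta)$, we have $\bar{\theta'}(u)=A$ and $\bar{\theta'}(v)=B$.
\end{itemize}
At $z$, the swap gives $\bar{\theta'}(z)=(\bar{\theta}(z)\setminus\{\alpha\})\cup\{\beta\}$. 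Since $\beta\in B$ and $A\cap B=\varnothing$ by Lemma~\ref{lem:minclass-rigidity}(i), $\beta\notin A$. Hence $|A\cap\bar{\theta'}(z)|=|A\cap\bar{\theta}(z)|-1$, contradicting the minimality of $\theta$.

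The only delicate point is showing that the Kempe change at $z$ does not disturb $u$ or $v$, i.e.\ that $z$ is not on the $u$–$v$ chain. That follows from the degree-one observation together with $z\notin\{u,v\}$. Everything else is bookkeeping of missing colors.
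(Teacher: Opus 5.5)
Your proposal is correct and follows the paper's proof essentially step for step: the same extremal choice of an admissible $\theta$ minimizing $|A\cap\bar\theta(z)|$, the same use of Lemma~\ref{lem:blocker-setup} to obtain $\beta\in B\setminus\bar\theta(z)$, and the same Kempe change on $P_z(\alpha,\beta,\theta)$ after showing that $z$ is not on the $u$--$v$ chain. Your justification of $z\notin\{u,v\}$ is a little more direct than the paper's degree comparison via Lemma~\ref{lem:middle}: $u$ and $v$ see $r$ while $z$ misses it.
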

	
	\begin{proof}
		Call a proper $r$-edge-coloring $\theta$ \emph{admissible}
		if $E_r(\theta)=E_r(\varphi)$, $\bar{\theta}(u)=A$, $\bar{\theta}(v)=B$ and $r\in\bar{\theta}(z)$.
		By the definitions of $A$ and $B$ and by the observation above, the coloring $\varphi$ is admissible.
		
		Among all admissible colorings, choose $\theta$ such that $|A\cap\bar\theta(z)|$ is minimum. Suppose, to the contrary, that $A\cap\bar{\theta}(z)\ne\varnothing$. Choose $a\in A\cap\bar{\theta}(z)$.
		By Lemma~\ref{lem:blocker-setup}, there exists $b\in B\setminus\bar{\theta}(z)$.
		
		Since $ \bar{\theta}(u)=A$, $\bar{\theta}(v)=B$ and $A\cap B=\varnothing$, the color $a$ is missing at $u$ and present at $v$, while the color $b$ is missing at $v$ and present at $u$. Moreover, $E_r(\theta)=E_r(\varphi)$ and $uv\in E_r(\theta).$ By Lemma~\ref{lem:minclass-rigidity}(iii), the vertices $u$ and $v$ are $(a,b)$-linked under $\theta$. Hence the $(a,b)$-component containing $u$ and $v$ is a path with endvertices $u$ and $v$.
		
		At $z$, the color $a$ is missing and the color $b$ is present. Thus $z$
		has degree one in $G_\theta(a, b)$. Moreover, $z\notin\{u,v\}$, because $d_G(z)=\Delta$, whereas Lemma~\ref{lem:middle} gives $d_G(u),d_G(v)\le q-1\le\Delta-1$.
		Hence $z$ cannot lie in the $(a,b)$-component containing $u$ and $v$: every non-endvertex of
		that path has degree two in $G_\theta(a, b)$, while $z$ has degree one.
		Thus $P_z(a, b, \theta)$ is distinct from the $(a,b)$-component containing $u$ and $v$.
		
		Interchange $a$ and $b$ on $P_z(a, b, \theta)$, obtaining a proper $r$-edge-coloring $\theta'$. Since $u,v\notin V(P_z(a, b, \theta))$, $\bar{\theta'}(u)=A$ and $\bar{\theta'}(v)=B$.
		Also $a,b\ne r$, because color $r$ is present at both $u$ and $v$.
		Therefore
		$$
		E_r(\theta')=E_r(\theta)=E_r(\varphi).
		$$
		Moreover,
		$$
		\bar{\theta'}({z})
		=
		(\bar{\theta}({z})\setminus\{a\})\cup\{b\}.
		$$
		Since $a\in A$ and $b\in B$ with $A\cap B=\varnothing$,
		$$ |A\cap\bar{\theta'}(z)| = |A\cap\bar{\theta}(z)|-1. $$
		Finally, $a, b\ne r$, so $r$ remains missing at $z$. Hence $\theta'$ is again admissible.
		
		This contradicts the choice of $\theta$. Therefore $A\cap\bar\theta(z)=\varnothing$.
		Taking $\psi=\theta$ completes the proof.
	\end{proof}
	
	\section{Proof of Theorem~\ref{thm:conjecture4.1}}
	
	\MainTheorem*
	
	\begin{proof}
		If $k=0$, then $\chi'(G)=\Delta$. Let $\varphi$ be a proper
		$\Delta$-edge-coloring of $G$, and choose a vertex $z$ with
		$d_G(z)=\Delta$. Partition the color set $[\Delta]$ into two sets
		$S$ and $T$ with $|S|=p$ and $|T|=q$. Let $H_p$ and $H_q$ be the
		spanning subgraphs formed by the color classes in $S$ and $T$,
		respectively. Since every color in $[\Delta]$ appears at $z$,
		we have
		\[
		d_{H_p}(z)=p
		\qquad\text{and}\qquad
		d_{H_q}(z)=q.
		\]
		Hence
		\[
		\chi'(H_p)=\Delta(H_p)=p,
		\qquad
		\chi'(H_q)=\Delta(H_q)=q.
		\]
		Thus the result holds when $k=0$. Henceforth assume $k\ge1$.
		
		Suppose, to the contrary, that the theorem fails. Choose $\varphi$ and the minimum color class $E_r(\varphi)$ as above, and fix an edge $uv\in E_r(\varphi)$.
		Relabel $u$ and $v$, if necessary, so that $d_G(u)\le d_G(v).$
		By Lemma~\ref{lem:middle}, $p+2\le d_G(u)\le d_G(v)\le q-1$.
		
		Let $A=\bar{\varphi}(u)$, $B=\bar{\varphi}(v)$, and choose a vertex $z$ with $d_G(z)=\Delta$.
		
		By Lemma~\ref{lem:blocker-elimination}, there exists a proper
		$r$-edge-coloring $\psi$ such that
		$$
		\bar{\psi}(u)=A
		$$
		and
		$$
		A\cap\bar{\psi}(z)=\varnothing.
		$$
		Hence
		$$
		\bar{\psi}(u)\cap\bar{\psi}(z)
		=
		A\cap\bar{\psi}(z)
		=
		\varnothing.
		$$
		
		Moreover,
		$$
		d_G(u)\ge p
		$$
		and
		$$
		d_G(z)=\Delta\ge q.
		$$
		Therefore Lemma~\ref{lem:threshold}, applied to $u$ and $z$ under the
		coloring $\psi$, yields an edge decomposition
		$$
		E(G)=E(H_p)\mathbin{\dot\cup}E(H_q)
		$$
		into two spanning Class I subgraphs satisfying
		$$
		\Delta(H_p)=p
		\qquad\text{and}\qquad
		\Delta(H_q)=q.
		$$
		This contradicts the assumption that $G$ is a counterexample.
	\end{proof}
	
	%	\section*{Declaration on the use of AI}
	%	
	%	The authors used ChatGPT solely for grammar checking, language polishing, and improving the clarity of the exposition. The authors used ChatGPT for language polishing and for checking the clarity and internalconsistency of the exposition. All mathematical ideas, proofs, and the overall structure of the paper were developed entirely by the authors. The authors take full responsibility for the correctness and originality of the paper.
	
	\bibliography{references_v9}
\end{document}